\theoremstyle{plain}
\theoremstyle{plain}
\newtheorem{theorem}{Theorem}[section]
\newtheorem{lemma}[theorem]{Lemma}
\newtheorem{corollary}[theorem]{Corollary}
\theoremstyle{definition}
\newtheorem{definition}[theorem]{Definition}
\newcommand{\ds}{\displaystyle}
\newcommand{\bord}{\partial}
\renewcommand{\H}{\mathrm{H}}
\newcommand{\Hel}{\mathrm{Hel}}
\renewcommand{\le}{\leqslant}
\newcommand{\Lk}{\mathrm{Lk}}
\newcommand{\N}{\mathbb{N}}
\newcommand{\phiHopf}{\phi_{\mathrm{Hopf}}}
\newcommand{\R}{\mathbb{R}}
\newcommand{\Sph}{\mathbb{S}}
\newcommand{\Schw}{\mathcal{S}}
\newcommand{\Slk}{\mathrm{Slk}}
\newcommand{\T}{\mathbb{T}}
\newcommand{\XHopf}{X_{\mathrm{Hopf}}}
\newcommand{\Z}{\mathbb{Z}}
\newcommand{\zetaHopf}{\zeta_{\mathrm{Hopf}}}
\begin{document}

\title{Vector fields and genus in dimension 3}
%\date{first version October 1st, 2019; last version June 20th, 2020.}

\author{Pierre Dehornoy}
\address{Pierre Dehornoy, Univ. Grenoble Alpes, CNRS, Institut Fourier, F-38000 Grenoble, France}
\email{pierre.dehornoy@univ-grenoble-alpes.fr}
\urladdr{http://www-fourier.ujf-grenoble.fr/~dehornop/}
\thanks{PD is supported by the projects ANR-15-IDEX-02 and ANR-11LABX-0025-01}

\author{Ana Rechtman}
\address{Ana Rechtman, Institut de Recherche Math\'ematique Avanc\'ee,
Universit\'e de Strasbourg,
7 rue Ren\'e Descartes,
67084 Strasbourg, France}
\email{rechtman@math.unistra.fr}
\urladdr{https://irma.math.unistra.fr/~rechtman/}
\thanks{AR thanks the UMI LaSol and the support of the IdEx Unistra, Investments for the future program of the French Government.}

%%%%%%%%%%%%%%%%%%%%%%%%%%%%%%%%%%%%%

\begin{abstract}
	Given a vector field on a 3-dimensional rational homology sphere, 
	we give a formula for the Euler characteristic of its transverse surfaces, in terms of boundary data only. 
	This provides a formula for the genus of a transverse surface, and in particular, of a Birkhoff section.
	As an application, we show that for a right-handed flow with an ergodic invariant measure, 
	the genus is an asymptotic invariant of order 2 proportional to helicity.
%	that is, the genus of periodic orbits has a quadratic behaviour when the length of the orbits tends to infinity.
\end{abstract}

\maketitle

\section{Introduction}

In this paper we study topological properties of non-singular vector fields on 3-dimensional homology spheres. 
An important problem in this field is to obtain measure-preserving homeomorphism or diffeomorphism invariants, meaning that the value is the same for flows that are conjugated by a homeomorphism or a diffeomorphism that preserve a given measure.
We provide a step towards the possible definition of such an invariant built from the genus of knots. 

Introduced by Woltjer, Moreau and Moffatt, {\it helicity} is the main known invariant~\cite{Woltjer, Moreau, Moffatt}. 
For a divergence free vector field $X$ on a closed Riemannian manifold $M$, it is defined by the formula~$\Hel(X) = \int X\cdot Y$, where $Y=\text{curl}^{-1}(X)$ is an arbitrary vector-potential of~$X$. 
Arnold and Vogel proved that, on homology spheres, helicity coincides with the average asymptotic linking number~\cite{Arnold, Vogel}. 
More precisely, let $\phi_X^t$ for~$t\in \R$ be the flow of $X$ and denote by~$k_X(p,t)$ the loop starting at the point $p$ that follows the orbit until~$\phi_X^t(p)$ and closes by an arbitrary segment of bounded length. 
The average asymptotic linking number is the double integral, with respect to an invariant measure, of $\lim_{t_1,t_2\to\infty}\frac{\Lk(k_X(p_1,t_1),k_X(p_2,t_2))}{t_1t_2}$, where $\Lk$ is the linking number between the two loops. 

In order to produce other asymptotic invariants, one is tempted to replace the linking number by another link or knot invariant. In this direction,
Freedman and He constructed the asymptotic crossing number~\cite{FH}, Gambaudo and Ghys constructed the asymptotic Ruelle invariant~\cite{GGRuelle} (see also Section~\ref{S:AsGenus}), while the authors of this paper constructed the trunkenness based on the trunk of a knot~\cite{DR}. 
These are three examples of invariants that are not proportional to helicity. 
On the other hand, Gambaudo and Ghys considered $\omega$-signatures of knots~\cite{GG}, Baader considered linear saddle invariants~\cite{Baader}, and Baader and March\'e considered Vassiliev's finite type invariants~\cite{BM}. 
All these constructions have the drawback that they do not yield any new invariant for ergodic vector fields, the obtained limits are all functions of the helicity.  
For an explanation for the ubiquity of helicity we refer to~\cite{Kudr,EPT}. 

The genus of a knot~$k$, denoted by~$g(k)$ and defined as the minimal genus of an orientable surface spanned by~$k$, is a fundamental invariant in knot-theory. 
An open problem in the context of vector fields is to prove that the limit of $ \frac{1}{t^n}g(k_X(p,t))$ exists for some $n$. 
Actually $n=2$ is the natural candidate as explained in~\cite[Question 5.4]{WBLN} and as a consequence of Theorem~\ref{T:AsGenus}.

In this paper we study the genus of surfaces whose boundary is composed by one or several periodic orbits of the flow and whose interior is transverse to the vector field. 
We refer to such surfaces as {\bf transverse surfaces}. 
Considering only such surfaces is a strong restriction, since  among the collection of  surfaces  with fixed boundary, those of minimal genus need not be transverse to the vector field. 
But a specific hypothesis on the flow will ensure that it is the case. 
Theorem~\ref{T:AsGenus} stands for {\bf right-handed vector fields}: these are vector fields on homology spheres all of whose invariant positive measures link positively~\cite{GhysLeftHanded}. This hypothesis implies that any collection of periodic orbits of the flow bounds a transverse surface that intersects all the orbits of the flow.

\begin{theorem}\label{T:AsGenus}
	Let $M$ be a 3-manifold that is a rational homology sphere, 
	$X$ a non-singular right-handed vector field on~$M$ and $\mu$ a $X$-invariant measure. 
	If $(\gamma_n)_{n\in\N}$ is a sequence of periodic orbits whose lengths $(t_n)_{n\in\N}$ tend to infinity 
	and such that $(\frac{1}{t_n}\gamma_n)_{n\in\N}$ tends to~$\mu$ in the weak-$*$ sense, 
	then the sequence $(\frac1{t_n^2}g(\gamma_n))_{n\in\N}$ tends to half the helicity of~$(X, \mu)$.  
\end{theorem}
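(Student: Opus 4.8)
The plan is to sandwich the genus between an upper and a lower bound that both converge, after division by $t_n^2$, to half the helicity, and to identify the common limit by feeding a transverse surface into our formula for the Euler characteristic of transverse surfaces and then interpreting the leading term as an asymptotic self-linking number.

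First I would reduce the computation to transverse surfaces. The upper bound is immediate: since $X$ is right-handed, each $\gamma_n$ bounds a transverse surface $S_n$ (a Birkhoff section), and transverse surfaces are in particular Seifert surfaces for $\gamma_n$, so $g(\gamma_n) \le g(S_n)$. For the matching lower bound one uses the essential feature of right-handedness, namely that the transverse surface can be taken to realize the minimal genus — it is a fibre surface, which is the reason right-handed orbits are fibered. Granting this, $g(\gamma_n) = g(S_n)$. As each $\gamma_n$ is a single (rationally null-homologous) orbit, $g(\gamma_n) = \frac{1-\chi(S_n)}{2}$ up to the rational correction coming from the order of $\gamma_n$ in $\H_1(M)$, so it suffices to prove that $\frac{-\chi(S_n)}{t_n^2}$ tends to $\Hel(X,\mu)$.

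Next I would apply our Euler characteristic formula to $S_n$, which expresses $\chi(S_n)$ through boundary data alone, and split this expression into a dominant term and a remainder. The dominant term is the self-linking contribution, equal to $-\Lk(\gamma_n,\gamma_n')$ for a pushoff $\gamma_n'$ of $\gamma_n$ transverse to $X$; the remaining terms are rotation/index contributions localized along $\gamma_n$, hence bounded by a constant multiple of the length $t_n$. After dividing by $t_n^2$ these remainder terms and the additive constant vanish in the limit, reducing the problem to showing
$$
\frac{1}{t_n^2}\,\Lk(\gamma_n,\gamma_n') \longrightarrow \Hel(X,\mu).
$$
This last step is the asymptotic self-linking statement underlying the Arnold–Vogel theorem: the self-linking is the double integral of the (integrable, solid-angle) linking kernel against $\tfrac1{t_n}\gamma_n\times\tfrac1{t_n}\gamma_n$, and the hypothesis that $\tfrac1{t_n}\gamma_n\to\mu$ weak-$*$ forces the product measures to converge to $\mu\times\mu$, whose pairing with the kernel is exactly the helicity. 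The choice of transverse pushoff framing affects $\Lk(\gamma_n,\gamma_n')$ only by $O(t_n)$ and is therefore irrelevant after normalization.

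The hard part is the convergence of this double integral under mere weak-$*$ convergence of $\tfrac1{t_n}\gamma_n$: the linking kernel is singular along the diagonal, so weak-$*$ convergence does not formally imply convergence of the self-linking, and the near-diagonal contribution — precisely the local twisting of the orbit that carries the genus information — must be controlled separately. This is exactly where right-handedness is indispensable: positivity of all linking numbers prevents cancellation, so the diagonal contribution is subquadratic and the limit equals the helicity rather than merely being bounded by it. The other delicate point is the lower bound of the first paragraph, i.e. that the transverse surface is genus-minimizing; I would isolate this as the place where the fibered structure of right-handed flows enters, the upper bound being elementary.
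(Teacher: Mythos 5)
Your architecture is the paper's own: right-handedness gives a Birkhoff section $S_n$ bounded by $\gamma_n$ (Ghys~\cite{GhysLeftHanded}), the blown-up manifold fibers over~$\Sph^1$ with fiber~$S_n$, so $S_n$ is a fiber surface and is genus-minimizing, whence $g(\gamma_n)=1+(\Slk^{\zeta_X}(\gamma_n)-1)/2$ by Corollary~\ref{Coro}. Note that for a single orbit with multiplicity $1$ the formula of Theorem~\ref{T:Chi} has no mutual-linking terms and the $\gcd$ term equals~$1$, so there is no ``remainder'' inside the formula itself; the only remaining splitting is the comparison of framings, which is Equation~\eqref{Eq:Framings}: $\Slk^{\zeta_X}(\gamma)=\Slk^{DX}(\gamma)-R^X(\gamma)$. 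Up to this point you agree with Section~\ref{s:asymptotic}, and your observation that the framing discrepancy is $O(t_n)$ is correct and even slightly more elementary than the paper's appeal to the convergence $\frac{1}{t_n}R^X(\gamma_n)\to R^X(\mu)$ of~\cite{GGRuelle}: boundedness of the angular speed of $DX$ relative to the trivialization $\zeta_X$ on the compact manifold~$M$ suffices, since only negligibility after division by $t_n^2$ is needed.

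The genuine gap is in your final step, and specifically in where you locate the use of right-handedness. The convergence $\frac{1}{t_n^2}\Slk^{DX}(\gamma_n)\to\Hel(X,\mu)$ is the Arnold--Vogel theorem~\cite{Arnold, Vogel} and holds for vector fields with an invariant measure with no handedness hypothesis whatsoever; the paper simply cites it. Your claim that ``positivity of all linking numbers prevents cancellation, so the diagonal contribution is subquadratic'' is not an argument: positivity has no bearing on the size of the near-diagonal part of the Gauss-kernel integral (cancellation would anyway only make it smaller), and nothing in your sketch actually bounds it. What controls the diagonal is the local twisting of nearby strands, i.e.\ exactly the framing discrepancy you already bounded --- provided you measure self-linking in the flow framing $DX$, which is the framing for which the Arnold--Vogel asymptotics are stated; your pushoff $\gamma_n'$ is the $\zeta_X$-pushoff, and passing between the two is again Equation~\eqref{Eq:Framings}, an $O(t_n)$ correction. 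So the repair is to drop the kernel re-derivation (or carry it out honestly for the $DX$ framing, which amounts to reproving Arnold--Vogel) and quote the theorem. Right-handedness is used exactly twice, both in your first paragraph: to guarantee that $\gamma_n$ bounds a transverse surface at all, and, via the fibered structure, that this surface realizes the genus.
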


Note that one can replace right-handedness by left-handedness, and only change the helicity by its absolute value. 
Right-, or left-, handedness is a strong restriction, but several important classes of vector fields have this property: for example the Lorenz vector field on~$\R^3$ is (in a certain sense) right-handed~\cite{GhysLeftHanded}, geodesic flows on positively curved surfaces are left-handed, as well as geodesic flows on hyperbolic triangular orbifolds~\cite{DLeftHanded}. 
On the other hand many flows are neither right- nor left-handed, as for example the Ghrist flow which contains all types of knots as periodic orbits~\cite{Ghrist}. 
For such general flows, we also expect the genus to have a quadratic asymptotic behaviour, but we expect the asymptotic value to be strictly larger than half the absolute value of the helicity. 
%The asymptotic crossing number of Freedman-He seems a better candidate in the general case.

The proof of Theorem~\ref{T:AsGenus} mostly relies on an adaptation of results on global sections to flows to the case of surfaces with boundary, coupled with the classical fact in knot theory that fiber surfaces for knots are genus-minimizing. 
More precisely, a transverse surface is a {\bf Birkhoff section} if it intersects all the orbits of the flow. 
If the boundary is empty, one speaks of a {\bf global cross section}. 
When a flow admits a global cross section, up to changing the time-parameter (\emph{i.e.} multiplying the vector field by a striclty positive function), the dynamics of the flow is described by the dynamics of the first-return map on the global cross section. 
The description of all global cross sections to a vector field is given by Schwartzman-Fuller-Sullivan-Fried Theory, which is purely homological~\cite{Schwartzman, Fuller, Sullivan, FriedGeom}. 
Moreover, Thurston and Fried gave formulas for computing the genus of global cross sections~\cite{ThurstonNorm, FriedFLP}. 

Schwartzman-Fuller-Sullivan-Fried Theory may be extended to Birkhoff sections, but this has only been partially done~\cite{FriedGeom, GhysLeftHanded, Umberto}.
What we do here is to push a bit further Fried's and Ghys' ideas. 
We provide, in Corollary~\ref{Coro}, a formula for the genus of transverse surfaces with boundary, that depends only on data calculated along boundary components. 
The corollary is deduced from the following result that provides a formula for the Euler characteristic of these type of surfaces. 

\begin{theorem}\label{T:Chi}
	Assume that $M$ is a 3-dimensional rational homology sphere 
	and that $X$ is a non-singular vector field on~$M$. 
	Let $\{\gamma_i\}_{1\le i\le m}$ be a finite collection of periodic orbits of~$X$ 
	and $\{n_i\}_{1\le i\le m}$ a collection of integers. If 
	$S$ is a transverse surface to~$X$ with oriented boundary~$\cup n_i\gamma_i$, 
	then the Euler characteristic of~$S$ is given by
	\[
		\chi(S)
		=-\sum_{1\le i<j\le m} (n_i{+}n_j)\Lk(\gamma_i, \gamma_j) - \sum_{1\le i\le m} n_i\Slk^{\zeta_X}(\gamma_i),
	\]
	where $\zeta_X$ denotes any vector field everywhere transverse to~$X$ 
	and $\Slk^{\zeta_X}$ the self-linking given by the framing~$\zeta_X$ (see Definition~\ref{D:Slk}).
\end{theorem}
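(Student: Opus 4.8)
The plan is to use the transversality of $S$ to turn the computation of $\chi(S)$ into a winding count concentrated along the boundary. First I would note that over the interior of $S$ the line field $\langle X\rangle$ trivialises the normal bundle, so $TM|_S=TS\oplus\langle X\rangle$ there; projecting $\zeta_X$ onto $TS$ parallel to $X$ then yields a tangent vector field $w$ on the interior of $S$. Since $\zeta_X$ is everywhere transverse to $X$ it is never proportional to $X$, so $w$ never vanishes. Hence all of $\chi(S)$ must be read off from the behaviour of $w$ near $\partial S$, where this splitting degenerates because $X$ becomes tangent to $S$ along the periodic orbits $\gamma_i$.

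To make this precise I would resolve the boundary. Removing small flow-adapted tubular neighbourhoods of the $\gamma_i$ (equivalently, blowing up each orbit to a boundary torus) turns $S$ into a compact surface $\hat S$ whose boundary consists of curves $C_i$ lying on the tori $\partial N_\epsilon(\gamma_i)$, and $\chi(\hat S)=\chi(S)$ since only collars of $\partial S$ are discarded. On $\hat S$ the field $w$ is a nowhere-zero tangent field, so the relative Euler number $e(T\hat S,w)$ vanishes. The standard identity $e(T\hat S,w)=\chi(\hat S)+\sum_i\mathrm{wind}_{C_i}(w,\nu)$, with $\nu$ the outward normal to $\hat S$ inside $S$, then gives
\[
\chi(S)=-\sum_i \mathrm{wind}_{C_i}(w,\nu).
\]
It remains to evaluate each boundary winding.

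The core of the argument is the local analysis of $\mathrm{wind}_{C_i}(w,\nu)$. The curve $C_i$ wraps $n_i$ times longitudinally around $\gamma_i$, and I would split the winding of $w$ against $\nu$ into two parts: a twisting of $\zeta_X$ relative to the framing that $S$ induces along $\gamma_i$, and a twisting of the tangent plane of $S$ caused by $\gamma_i$ threading through $M$ and linking the other boundary orbits. The first part is exactly the defect between the surface framing and the $\zeta_X$-framing of $\gamma_i$, measured by $\Slk^{\zeta_X}(\gamma_i)$; the second is governed by the homology class of $C_i$ on the torus, which on a rational homology sphere is pinned down by the numbers $\Lk(\gamma_i,\gamma_j)$. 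Carried out carefully this should give $\mathrm{wind}_{C_i}(w,\nu)=n_i\big(\sum_{j\neq i}\Lk(\gamma_i,\gamma_j)+\Slk^{\zeta_X}(\gamma_i)\big)$, and summing over $i$ together with the elementary identity $\sum_i n_i\sum_{j\neq i}\Lk(\gamma_i,\gamma_j)=\sum_{i<j}(n_i+n_j)\Lk(\gamma_i,\gamma_j)$ produces the stated formula. That the answer is independent of the chosen transverse $\zeta_X$ is consistent, since $H^1(M;\Z)=0$ forces all everywhere-transverse fields to be homotopic, so $\Slk^{\zeta_X}$ is well defined.

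The main obstacle is precisely this last step: controlling $w$ in the region where $X$ turns from transverse to tangent, and bookkeeping the framings so that the self-linking and the pairwise linking numbers appear with exactly the right coefficients. The global reduction is soft, but extracting $n_i\big(\sum_{j\neq i}\Lk(\gamma_i,\gamma_j)+\Slk^{\zeta_X}(\gamma_i)\big)$ from the spiralling of $S$ into each $\gamma_i$ — in particular seeing why the linking numbers enter unweighted by $n_j$, so that the symmetric factor $(n_i+n_j)$ emerges — is where the real work lies.
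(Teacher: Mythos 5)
Your global reduction is sound, and it is in essence the same mechanism as the paper's proof: there one modifies $\zeta_X$ near the blown-up orbits so that the tangency locus $\{X\parallel\zeta\}$, whose class is dual to $e(X_\Gamma^\perp)$, sits entirely on the boundary tori, and $\chi(S)$ is the pairing of $[S]$ with that locus; your nowhere-zero projected field $w$ plus relative Poincar\'e--Hopf says exactly the same thing, that all of $\chi(S)$ is concentrated in boundary winding. The genuine gap is the step you yourself flag as ``where the real work lies'': the evaluation of $\mathrm{wind}_{C_i}(w,\nu)$. Worse, the value you propose to extract is wrong at the level of individual components. The winding along $C_i$ is the intersection number on the torus $\gamma_i\times\Sph^1$ of $[C_i]$ with the tangency locus, which consists of two longitudes of class $(-\Slk^{\zeta_X}(\gamma_i),-1)$ in the (meridian, $0$-longitude) basis, each carrying multiplicity $\frac12$ (they arise as index-$(-\frac12)$ singularities of the projected field on a transverse annulus). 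Since $[C_i]=\bigl(-\sum_{j\neq i}n_j\,\Lk(\gamma_i,\gamma_j),\,n_i\bigr)$, the determinant gives $n_i\Slk^{\zeta_X}(\gamma_i)+\sum_{j\neq i}n_j\,\Lk(\gamma_i,\gamma_j)$: the linking numbers enter weighted by $n_j$, the opposite of your claim $n_i\bigl(\sum_{j\neq i}\Lk(\gamma_i,\gamma_j)+\Slk^{\zeta_X}(\gamma_i)\bigr)$. Concretely, with $m=2$, $n_1=1$, $n_2=2$, $\Lk(\gamma_1,\gamma_2)=\ell$, your winding at $\gamma_1$ is $\ell+\Slk^{\zeta_X}(\gamma_1)$ while the correct one is $2\ell+\Slk^{\zeta_X}(\gamma_1)$. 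Your final formula survives only because summation over $i$ symmetrizes the error: $\sum_i\sum_{j\neq i}n_j\,\Lk(\gamma_i,\gamma_j)=\sum_i n_i\sum_{j\neq i}\Lk(\gamma_i,\gamma_j)$ by symmetry of $\Lk$, so both per-component guesses yield $\sum_{i<j}(n_i+n_j)\Lk(\gamma_i,\gamma_j)$. But the local statement you would need to prove is false as written, and a careful execution would have to discover the correct weighting.

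The second missing ingredient is the meridian coordinate itself. You assert that the class of $C_i$ is ``pinned down by the numbers $\Lk(\gamma_i,\gamma_j)$'' but give no argument; this is precisely the paper's Lemma~\ref{L:Framing}, proved by desingularizing the union $\cup_j n_jS_j$ of Seifert surfaces and counting one meridian removal along $\gamma_i$ for each of the $\Lk(\gamma_i,\gamma_j)$ intersections of $S_j$ with $\gamma_i$, yielding the coordinates $\bigl(-\sum_{j\neq i}n_j\,\Lk(\gamma_i,\gamma_j),\,n_i\bigr)$. That lemma, together with the identification of the two half-weighted tangency longitudes of slope $(-\Slk^{\zeta_X}(\gamma_i),-1)$, constitutes the actual proof; without them your proposal is a correct framework whose decisive computation is both absent and mis-guessed.
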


We can easily deduce the formula for the genus of the surface.

\begin{corollary}\label{Coro}
Assume that $M$ is a 3-dimensional rational homology sphere 
	and that $X$ is a non-singular vector field on~$M$. 
	Let $\{\gamma_i\}_{1\le i\le m}$ be a finite collection of periodic orbits of~$X$ 
	and $\{n_i\}_{1\le i\le m}$ a collection of integers. If 
	$S$ is a transverse surface to~$X$ with oriented boundary~$\cup n_i\gamma_i$, 
	then the genus of $S$ is given by
	\[
	g(S)
	= 1+\frac{1}{2}\left(\sum_{1\le i<j\le m} (n_i{+}n_j)\Lk(\gamma_i, \gamma_j) + \sum_{1\le i\le m} \left(n_i\Slk^{\zeta_X}(\gamma_i)-\gcd(n_i, \sum_{j\neq i}n_j \,\Lk(\gamma_i, \gamma_j))\right)\right).
	\]
\end{corollary}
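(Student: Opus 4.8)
The plan is to read off the genus from the Euler characteristic formula of Theorem~\ref{T:Chi}, the only missing ingredient being the number of boundary components of~$S$. Assuming $S$ connected (the general case following componentwise), if $S$ has genus $g(S)$ and $b(S)$ boundary components then $\chi(S)=2-2g(S)-b(S)$, whence
\[
	g(S)=1+\tfrac12\bigl(-\chi(S)-b(S)\bigr).
\]
Plugging in the value of $-\chi(S)$ given by Theorem~\ref{T:Chi}, the corollary reduces to the purely combinatorial claim that
\[
	b(S)=\sum_{1\le i\le m}\gcd\Bigl(n_i,\ \sum_{j\neq i}n_j\,\Lk(\gamma_i,\gamma_j)\Bigr).
\]
So the whole point is to show that the right-hand side counts the boundary curves of~$S$.

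Next I would count the boundary components sitting along a single orbit $\gamma_i$. Blowing up $\gamma_i$ to a torus $T_i$ (as in Fried's and Ghys' setup), the boundary of~$S$ near $\gamma_i$ becomes a family of disjoint simple closed curves on~$T_i$; since disjoint essential curves on a torus are mutually parallel, they all carry the same primitive slope, and their number $c_i$ equals $\gcd$ of the longitudinal and meridional winding numbers of the total class $[\,\partial S\cap T_i\,]\in H_1(T_i)$. Because $\partial S=\bigcup n_i\gamma_i$ wraps $n_i$ times in the direction of~$\gamma_i$, the longitudinal winding is exactly~$n_i$, so $c_i=\gcd(n_i,p_i)$ where $p_i$ is the meridional winding. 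The number of boundary components is then $b(S)=\sum_i c_i$, each $\gamma_i$ contributing $c_i$ of them.

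It remains to compute the meridional coefficient~$p_i$. Working in the complement $M'=M\setminus\bigcup_i N_i$ of tubular neighbourhoods, the truncated surface bounds, so its boundary class vanishes in $H_1(M';\Q)$. Expressing the longitude $\lambda_i$ determined by the framing $\zeta_X$ in the meridian basis via $[\lambda_i]=\Slk^{\zeta_X}(\gamma_i)\,\mu_i+\sum_{j\neq i}\Lk(\gamma_i,\gamma_j)\,\mu_j$ and setting each $\mu_i$-coefficient of $[\partial S]$ to zero, I expect to obtain
\[
	p_i=-\,n_i\,\Slk^{\zeta_X}(\gamma_i)-\sum_{j\neq i}n_j\,\Lk(\gamma_i,\gamma_j).
\]
The self-linking term is a multiple of~$n_i$, so it disappears from the gcd: $\gcd(n_i,p_i)=\gcd\bigl(n_i,\sum_{j\neq i}n_j\Lk(\gamma_i,\gamma_j)\bigr)$, which is exactly the claimed count. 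The main obstacle is precisely this homological computation of~$p_i$: one must fix orientation and framing conventions carefully so that the longitude $\lambda_i$ is the one attached to $\zeta_X$, verify that the intersection pairing on $T_i$ identifies the meridional winding with a linking number, and check the sign bookkeeping so that the $\Slk^{\zeta_X}$ contribution is genuinely congruent to~$0$ modulo~$n_i$. Once $p_i$ is pinned down, the arithmetic simplification of the gcd and the substitution into the Euler characteristic are routine.
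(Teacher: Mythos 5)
Your proposal is correct and follows essentially the same route as the paper's own proof: both count the boundary components of $S$ along each $\gamma_i$ as the $\gcd$ of the longitudinal and meridional winding numbers of $\partial S$ on the blown-up torus, and then substitute the value of $\chi(S)$ from Theorem~\ref{T:Chi} into $\chi=2-2g-b$. The only cosmetic difference is that you compute the meridional coordinate in the basis given by the $\zeta_X$-longitude, picking up the extra term $-n_i\Slk^{\zeta_X}(\gamma_i)$ which you correctly discard modulo $n_i$ (using that $\Slk^{\zeta_X}(\gamma_i)$ is an integer), whereas the paper invokes Lemma~\ref{L:Framing} directly in the (meridian, 0-longitude)-basis, where the meridional coordinate is $-\sum_{j\neq i}n_j\,\Lk(\gamma_i,\gamma_j)$ and no self-linking term appears.
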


It is likely that Theorem~\ref{T:Chi} may be adapted to an arbitrary 3-manifold~$M$. 
In this case, one would have to adapt the definition of linking number which is not anymore well-defined. 

\bigskip

Theorem~\ref{T:Chi} and Corollary~\ref{Coro} also have an independent interest. 
Given a flow in a 3-manifold, it is a natural question to look for Birkhoff sections of minimal genus or minimal Euler characteristics. 
For example Fried asked whether every transitive Anosov flow with orientable invariant foliations admits a genus-one Birkhoff section~\cite{FriedAnosov}. 
Similarly, Etnyre asked wether every contact structure can be defined by a contact form whose Reeb flow admits a genus-one Birkhoff section~\cite{Etnyre}. 
Corollary~\ref{Coro} was implemented by the first author for the geodesic flows on some hyperbolic orbifolds, which led to answer positively Fried's question in these cases~\cite{GenusOne}. 
It was also used by Dehornoy and Shannon to numerically check that suspensions of linear automorphisms of~$\T^2$ admit infinitely many genus-one Birkhoff sections~\cite{DShannon}. 

\bigskip

The paper is organized as follows. 
In Section~\ref{sec:proofmain} we give a proof of Theorem~\ref{T:Chi} and Corollary~\ref{Coro}. 
In Section~\ref{s:examples} we illustrate Theorem~\ref{T:Chi} with the example of the Hopf vector field.
%, where we easily compute the genus of a Hopf link with $n$ components, 
Theorem~\ref{T:AsGenus} is proved in Section~\ref{s:asymptotic}.

\bigskip

\noindent{\bf Acknowledgments.} The authors thank Adrien Boulanger, \'Etienne Ghys and Christine Lescop for several discussions around the topic of this paper, and the referee for several suggestions that hopefully improve the readibility of the paper.

%%%%%%%%%%%%%%%%%%%%%%%%%%%%%%%%%%%%%%%%%%%%%%%

\section{The Euler characteristic of a transverse surface}\label{sec:proofmain}

The aim of this section is to give a proof of Theorem~\ref{T:Chi},
which is done in Section~\ref{S:QHS}. 
We first recall in~\ref{S:SFSF} some classical results on global cross sections and we explain in~\ref{S:Genus} how the genus of such a section can be obtained. 
Section~\ref{S:Linking} introduces linking and self-linking, 
and~\ref{S:Boundary} contains the key-lemma for the proof of Theorem~\ref{T:Chi}.

%%%%%%%%%%%%%%%%%%%%%%%

\subsection{Schwartzman-Fuller-Sullivan-Fried Theory}\label{S:SFSF}

We recall classical results concerning global cross sections to flows, but we state them in the more general context of a 3-manifold~$M$ with toric boundary and a non-singular vector field~$X$ tangent to~$\partial M$. 
The original proofs extend verbatim to this case. 

First we recall the definition of {\bf asymptotic cycles}. 
The original one is in terms of almost-periodic orbits~\cite{Schwartzman}: 
for $p\in M$ and $t>0$, we denote by $k_X(p, t)$ the closed curve obtained by connecting the arc of orbit~$\phi_X^{[0,t]}(p)$ with a segment of bounded length (recall that $M$ is compact). 
Assume now that $\mu$ is an ergodic invariant positive measure and that $p$ is a quasi-regular point for~$\mu$. 
The asymptotic cycle~$a_\mu$ determined by~$\mu$ is the weak-$*$ limit~$(\frac{1}{t_n}[k_X(p, t_n)])_{n\in\N}$, with $t_n\to\infty$. 
It is independent of~$p$ and~$t_n$. 
The set~$\Schw_X$ of all asymptotic cycles is the convex hull of those asymptotic cycles associated to ergodic invariant positive measures. 
It is a convex cone in~$\H_1(M; \R)$.

Alternatively, for $\mu$ a $X$-invariant positive measure, one can consider the 1-current $c_\mu:\Omega^1(M)\to\R$ which maps a 1-form $f$ to~$\int_M f(X(p))\,d\mu(p)$. The invariance of~$\mu$ implies that~$c_\mu$ is closed ({\it i.e.}, it vanishes on closed forms), hence it determines a 1-cycle $[c_\mu]\in\H_1(M; \R)$ in the sense of De Rham. 
The two notions actually coincide: when $\mu$ is an ergodic measure, $a_\mu$ and $[c_\mu]$ are equal under the identification of singular and current homologies~\cite{Sullivan}. 

%One denotes by~$\Schw_X$ the set of all asymptotic cycles of~$X$; 
Schwartzman's criterion~\cite{Schwartzman} is the following.  
%Recall that Poincar\'e duality identifies~$\H^1(M; \R)$ with $\H_2(M, \partial M; \R)$.

\begin{theorem}[Schwartzman]
	A class~$\sigma\in \H^1(M; \Z)$ is dual to a global cross section 
	if, and only if, for every asymptotic cycle~$c\in\Schw_X$ one has $c(\sigma) >0$.
\end{theorem}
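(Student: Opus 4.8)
The plan is to prove Schwartzman's criterion by relating the existence of a global cross section to a cohomological positivity condition, following the duality between $1$-cycles carried by the flow and $1$-cocycles that evaluate positively on them. First I would recall that a global cross section $\Sigma$ is a codimension-one submanifold transverse to $X$ that every orbit meets; such a $\Sigma$ is dual to a class $\sigma\in\H^1(M;\Z)$ via intersection number, and since $X$ is transverse to $\Sigma$ and positively oriented, every piece of orbit $\phi_X^{[0,t]}(p)$ crosses $\Sigma$ positively, so its algebraic and geometric intersection counts agree and grow linearly in $t$. This gives the easy implication: if $\sigma$ is dual to a global cross section, then evaluating $\sigma$ on $a_\mu$ computes the asymptotic crossing rate, which is strictly positive for every nonzero asymptotic cycle because $\Sigma$ meets \emph{all} orbits. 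Extending by convexity from ergodic measures to all of $\Schw_X$ yields $c(\sigma)>0$ for every $c\in\Schw_X$.

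For the converse, the idea is to build the cross section from a closed $1$-form representing $\sigma$. Given $\sigma$ with $c(\sigma)>0$ on $\Schw_X$, I would choose a closed $1$-form $\omega$ with $[\omega]=\sigma$ and aim to modify it, within its cohomology class (by adding exact forms $df$), so that $\omega(X)>0$ everywhere; a regular level set (or, more precisely, the foliation defined by $\omega$ together with a section built along $X$) then gives the desired global cross section, since $\omega(X)>0$ forces orbits to be increasing in the $\omega$-``time'' and hence to cross any level set. The construction of such a positive representative is the crux: the hypothesis $c_\mu(\sigma)=\int_M \omega(X)\,d\mu>0$ for \emph{all} invariant $\mu$ says the continuous function $\omega(X)$ has positive integral against every invariant measure, and one then appeals to a functional-analytic/ergodic argument (a minimax or Hahn--Banach separation between the cone $\Schw_X$ of invariant measures and the hyperplane $\{\,\int \omega(X)\,d\mu=0\}$) to produce a coboundary $df$ with $\omega(X)+X\!\cdot\!f>0$ pointwise. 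This is the classical Schwartzman--Sullivan duality between foliated cycles and the forms that are positive on them.

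The main obstacle I anticipate is exactly this last step: passing from positivity of the \emph{averaged} quantity $\int\omega(X)\,d\mu$ over all invariant measures to \emph{pointwise} positivity of a cohomologous form. Positivity on every invariant measure does not immediately give positivity at every point, because $\omega(X)$ may be negative on a transient set of measure zero for all invariant $\mu$; the remedy is to correct $\omega$ by an exact form absorbing these transient excursions, which relies on the fact that the time-average of $\omega(X)$ along any orbit segment is controlled once its integral against the closure of the set of empirical measures is positive. Making this rigorous uses the compactness of the space of invariant probability measures and Schwartzman's asymptotic-cycle formalism, so I would invoke the identification $a_\mu=[c_\mu]$ recalled above to move freely between the current and singular pictures. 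Since the problem states that the original proofs extend verbatim to the manifold-with-toric-boundary setting, I would finally check only that the transversality of $X$ to $\partial M$ lets all the integration-by-parts and level-set arguments go through with boundary terms vanishing, which is routine.
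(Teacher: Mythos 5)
The paper does not actually prove this statement---it is quoted as a classical theorem of Schwartzman, with only the remark that the original proof extends verbatim to the toric-boundary setting---and your outline reproduces exactly that classical argument: bounded return times give the forward implication, and for the converse one corrects a closed representative $\omega$ of $\sigma$ by an exact form (via the orbit-averaging trick $f(p)=\frac1T\int_0^T\!\!\int_0^t\omega(X(\phi_X^s p))\,ds\,dt$, with $T$ chosen by compactness of the invariant measures and a limit of empirical measures) so that $\omega(X)>0$ pointwise, after which the fibers of the induced circle-valued map are global cross sections. You correctly isolate the averaged-to-pointwise positivity step as the crux and name the standard remedy, so the proposal is sound and essentially coincides with the proof the paper cites.
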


%%%%%%%%%%%%%%%%%%%%%%%

\subsection{Genus of global cross sections}\label{S:Genus}

In the context of the previous part, a standard argument shows that if two global cross sections to a vector field are homologous, then they are isotopic along the flow~\cite{ThurstonNorm}. 
Actually it shows more: a global cross section minimises the genus in its homology class. 
So one may wonder how to compute this genus. 
Thurston and Fried give a satisfying answer~\cite{FriedFLP}. 
Denote by~$X^\perp$ the normal bundle to~$X$ (it is the 2-dimensional bundle~$TM/\R X$), and by $e(X^\perp)\in\H^2(M, \partial M; \Z)$ its Euler class.

\begin{theorem}[Fried-Thurston]\label{thm: ft}
	Assume that $S$ is a surface transverse to~$X$. 
	Then one has $\chi(S) = e(X^\perp)([S])$. 
\end{theorem}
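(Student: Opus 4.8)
The plan is to reduce the relative Euler number $e(X^\perp)([S])$ to a count of zeros of a generic section of $TS$, using that transversality identifies the restriction $X^\perp|_S$ with $TS$.

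First I would use transversality pointwise. Since $S$ is transverse to $X$, at every $p\in S$ one has $\R X(p)\cap T_pS=0$, so the composite $T_pS\hookrightarrow T_pM\to T_pM/\R X(p)=X^\perp_p$ is an isomorphism of $2$-dimensional spaces. These assemble into a bundle isomorphism $X^\perp|_S\cong TS$. Writing $i\colon(S,\partial S)\hookrightarrow(M,\partial M)$ for the inclusion and using naturality of the relative Euler class, I get $e(X^\perp)([S])=\langle i^*e(X^\perp),[S,\partial S]\rangle=\langle e(X^\perp|_S),[S,\partial S]\rangle=\langle e(TS),[S,\partial S]\rangle$.

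Second I would identify the boundary section entering the relative Euler class. The class $e(X^\perp)\in\H^2(M,\partial M;\Z)$ is defined relative to the nonvanishing section of $X^\perp$ over $\partial M$ obtained by projecting the inward normal $\nu$ of $\partial M$; this projection is nonzero precisely because $X$ is tangent to $\partial M$. Along $\partial S=S\cap\partial M$ I would check how this section reads through $X^\perp|_S\cong TS$: at $p\in\partial S$ the image of $T_p\partial M$ in $X^\perp_p$ is the line spanned by $T_p\partial S$ (since $X(p)\in T_p\partial M$ dies in the quotient), while $[\nu]$ is transverse to that line; hence the vector of $T_pS$ mapping to $[\nu]$ is transverse to $\partial S$ inside $S$, i.e. an inward- or outward-pointing field along $\partial S$.

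Finally I would invoke the Poincar\'e--Hopf theorem for manifolds with boundary. The number $\langle e(TS),[S,\partial S]\rangle$ computed with this boundary section is the index sum of a generic extension to $S$ of a field transverse to $\partial S$; Poincar\'e--Hopf equates this with $\chi(S)$ for the outward convention and with $\chi(S)-\chi(\partial S)$ for the inward one. Since $\partial S$ is a disjoint union of circles, $\chi(\partial S)=0$, so both give exactly $\chi(S)$, proving $\chi(S)=e(X^\perp)([S])$. The main obstacle is the second step: one must verify carefully that the boundary section of the relative Euler class corresponds under transversality to a field transverse to $\partial S$ rather than tangent to it, and track the orientation conventions so that the count comes out as $+\chi(S)$; the vanishing of $\chi(\partial S)$ is exactly what renders the residual inward/outward ambiguity harmless.
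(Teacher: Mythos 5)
Your proposal is correct and takes essentially the same route as the paper: transversality yields the bundle isomorphism $X^\perp|_S\cong TS$, and naturality of the Euler class then gives $e(X^\perp)([S])=e(TS)([S])=\chi(S)$. The paper's proof is exactly this two-line computation; your extra work identifying the boundary section of the relative class and invoking Poincar\'e--Hopf for manifolds with boundary (harmless ambiguity since $\chi(\partial S)=0$) fills in conventions the paper leaves implicit rather than departing from its argument.
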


The argument is short: since $S$ is transverse to~$X$, the restricted bundle~$X^\perp|_S$ is isomorphic to the tangent bundle~$TS$. 
In particular one has $e(X^\perp)([S]) = e(TS)([S])=\chi(S)$. 

Said differently, if $\zeta$ is a vector field in generic position with respect to~$X$, the set $L_{\zeta, X}$ where~$\zeta$ is tangent to~$X$ is a 1-manifold. 
In order to consider its homology class, one has to orient~$L_{\zeta, X}$ and to equip it with multiplicities. 
Here is how one can do it~(see Figure~\ref{F:OrientationEuler}): 
one takes a small disc~$D$ positively transverse to~$X$ and transverse to~$L_{\zeta, X}$. 
The projection of~$\zeta$ on~$D$ along~$X$ defines a vector field~$\zeta_D$ on~$D$ with a singularity at the center. 
The index of the singularity may be positive or negative (it cannot be 0 for in this case $X$ and $\zeta$ would not be in generic position) and thus the product of its sign with the orientation of~$D$ induces a new orientation on~$D$. 
Since $D$ is transverse to~$L_{\zeta, X}$, this new orientation induces an orientation of~$L_{\zeta, X}$. 
The multiplicity then comes from the absolute value of the index of the singularity. 
Observe that the multiplicity is locally constant by continuity and hence constant on each connected component of $L_{\zeta,X}$. 
The point is that these choices are independent of~$D$. 
If one perturbes $D$ by keeping it transverse to~$X$ and~$L_{\zeta, X}$, by continuity and discreteness, the multiplicity and orientation do not change. 
If one perturbes~$D$ by keeping it transverse to~$X$ but change the relative position to respect to~$TL_{\zeta,X}$, the induced orientation on~$L_{\zeta, X}$ changes, but the index is also changed by its opposite. 
Hence the product is constant.
%Hence the multiplicity of $L_{\zeta,X}$ is unchanged. The situation is analogous when $\zeta$ is a negative multiple of $X$. 

\begin{figure}[h!]
	\centering
	\includegraphics*[width=.6\textwidth]{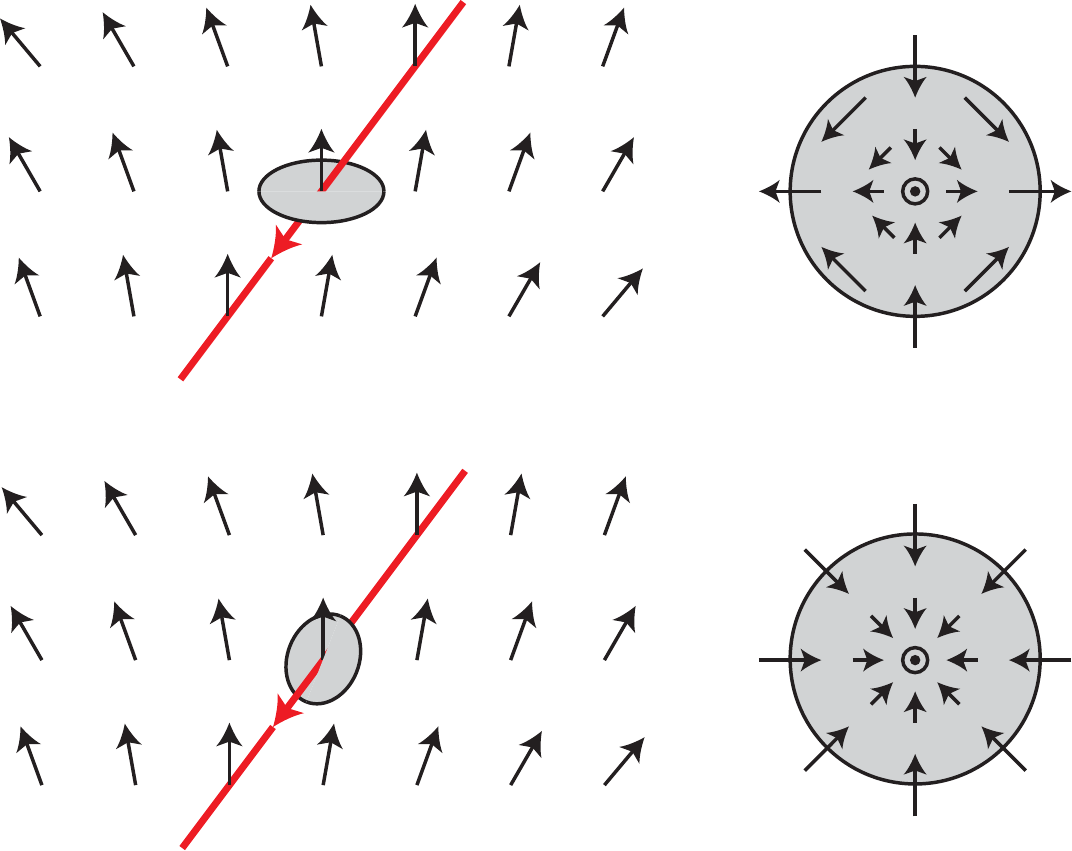}
   	\caption{\small Orientation of the dual~$L_{\zeta, X}$ of the Euler class~$e(X^\perp)$. 
	On this picture the vector field $X$ is locally vertical. 
	A vector field~$\zeta$ in general position with respect to~$X$ is shown. 
	It is tangent to $X$ (\emph{i.e.}, vertical) on a dimension 1-submanifold~$L_{\zeta, X}:=\{X\parallel \zeta\}$, in red. 
	We consider an arbitrary disc~$D$ transverse to both~$X$ and $L_{\zeta,X}$. 
	Projecting $\zeta$ on~$D$ along~$X$ we get a vector field~$\zeta_D$ with one singularity at the center. 
	The index of this vector field gives a multiplicity to the disc, 
	and together with the orientation of~$D$ an orientation and a multiplicity to~$L_{\zeta, X}$. 
	One checks that changing~$D$ may change the index by its opposite (bottom), 
	but also the orientation, so that their product is unchanged.}
  	\label{F:OrientationEuler}
\end{figure}

The class~$[L_{\zeta, X}]\in\H_1(M;\Z)$ is then Poincar\'e dual to~$e(X^\perp)$, see~\cite[Prop. 12.8]{BottTu}. 
Given a surface~$S$ transverse to~$X$, projecting~$\zeta$ on~$S$ along~$X$ yields a vector field~$\zeta_S$ on~$S$, which vanishes exactly when~$S$ intersects~$L_{\zeta, X}$. 
The Euler characteristic of~$S$ can be computed with the Poincar\'e-Hopf formula for~$\zeta_S$. 
The crucial point~\cite{ThurstonNorm, FriedFLP} is that, thanks to the orientation and multiplicity of~$L_{\zeta, X}$, each intersection point contributes with the right sign to the sum.  

%When $M$ is only a rational homology sphere, the zero-framing may not exist as defined above. 

%%%%%%%%%%%%%%%%%%%%%%%%%%%%%%%%%%%%%%%%%%%%%%%

\subsection{Linking and self-linking on homology spheres}\label{S:Linking}

We now assume that $M$ is a rational homology sphere. 
Given two links $L_1, L_2$, their {\bf linking number}~$\Lk(L_1, L_2)$ is defined as~$\langle L_1, S_2\rangle$, where $S_2$ is a rational 2-chain bounded by~$L_2$. 
The existence of~$S_2$ is guaranteed by the vanishing of~$[L_2]$, whereas the vanishing of~$[L_1]$ implies that the linking number is independent of the choice of~$S_2$. 
Linking number is symmetric, although this is not obvious from the definition we just gave.

A {\bf framing}  $f$ of a link~$L$ is a section of its unit normal bundle. 
It induces an isotopy class~$L^f$ in~$M\setminus L$ obtained by pushing~$L$ off itself in the direction of~$f$. 

\begin{definition}\label{D:Slk}
	Given a link~$L$ and a framing~$f$, the {\bf self-linking}~$\Slk^f(L)$ is defined 
	as the linking number of~$L$ and $L^f$.
\end{definition}

When $M$ is an integral homology sphere, every individual curve (that is, the curve with multiplicity one) has a preferred framing corresponding to a self-linking number zero. 
By definition, this {\bf zero-framing} is induced by any surface bounded by the considered curve.  
Therefore one can see the self-linking number  with respect to a framing $f$ as the algebraic intersection number between the framing $f$ and a surface bounded by the considered curve.

When $M$ is a rational non-integral homology sphere, there is not always a preferred framing in the above sense. 
One then has to consider {\bf rational framings} which are multi-sections of the unit normal bundle. 
More precisely, the unit normal bundle to a curve $\gamma$ is a torus~$\gamma\times\Sph^1$. 
A rational framing is a homotopy class of a closed curve on that torus. 
Given such a rational framing~$f$ of a link~$L$, we can extend the definition of self-linking to this context.  
Assume that $f$ winds $k_f$ times in the longitudinal direction along~$L$, we define~$L^f$ as the link obtained by pushing the link $L$ traveled~$k_f$ times off itself in the direction of~$f$. 
Then $\Slk^f(L)$ is defined as~$\frac1{k_f}\Lk(L, L^f)$. 
With this extended definition, there is always one (rational) zero-framing.
%
%\begin{example}
%If $\abas$ is a collection of geodesics on a hyperbolic orbifold~$\Sigma$ (that we assume to be a $\Q$-HS for consistance, although it is not necessary here), we consider the symmetric lift~$\ahaut$ in~$\U\Sigma$. 
%The self-linking number of~$\ahaut$ with respect to the stable direction of the geodesic flow is twice the number of double points of~$\abas$ on~$\Sigma$.
%\end{example}
%
%Given a link~$L=K_1\sqcup\dots\sqcup K_k$, we will be interested in having coordinates on the boundary of~$M_L$,  that is, on the tori~$\bord\nu(K_i)\simeq K_i\times\Sph^1$. 
%The meridian and the zero-framing longitude form a canonical basis.

%%%%%%%%%%%%%%%%%%%%%%%%%%%%%%%%%%%%%%%%%%%%%%%

\subsection{Boundary slope}\label{S:Boundary}

We continue with the assumption that $M$ is a rational homology sphere. Fix a link  $L=K_1\cup\dots\cup K_m$ in~$M$.  
The boundary operator~$\bord$ realises an isomorphism~$\H_2(M, L; \R)\simeq \H_1(L; \R)$, as can be seen by writing the long exact sequence.
Therefore a class in $\H_2(M, L; \R)$ is determined by its boundary class. 
For~$\sigma$ in~$\H_2(M, L; \R)$, denote by~$n_i(\sigma)$ its {\bf longitudinal boundary coordinates}, that is, the real numbers such that $\bord\sigma = \sum n_i(\sigma)[K_i]$. 

In this context, we denote by~$M_L$ the {\bf normal compactification} of~$M\setminus L$: the manifold obtained from~$M$ by replacing every point of~$L$ by the circle of those half-planes bounded by~$TL$. 
The boundary~$\bord M_L$ is then isomorphic to~$L\times\Sph^1$: it is
a disjoint union of tori.
The manifold $M_L$ is actually isomorphic to~$M\setminus\nu(L)$, where $\nu(L)$ is an open tubular neighbourhood of~$L$.

By the excision theorem, there is an isomorphism~$\H_2(M, L; \R)\simeq  \H_2(M_L, \bord M_L; \R)$, so that we can also see the class~$\sigma$ as an element of~$\H_2(M_L, \bord M_L; \R)$. 
There, its boundary is an element of~$\H_1(\bord M_L; \R)$, whose dimension is higher than the dimension of~$\H_1(L; \R)$. 
For distinction we then write $\bord^\bullet:\H_2(M, L; \R)\to\H_1(L; \R)$ and $\bord^\circ:\H_2(M_L, \bord M_L; \R)\to\H_1(\bord M_L; \R)$ for the two operators (see Figure~\ref{F:Boundary}). 
As we said the first one is an isomorphism, while the second one is only injective. 

\begin{figure}[h!]
	\centering
	\includegraphics{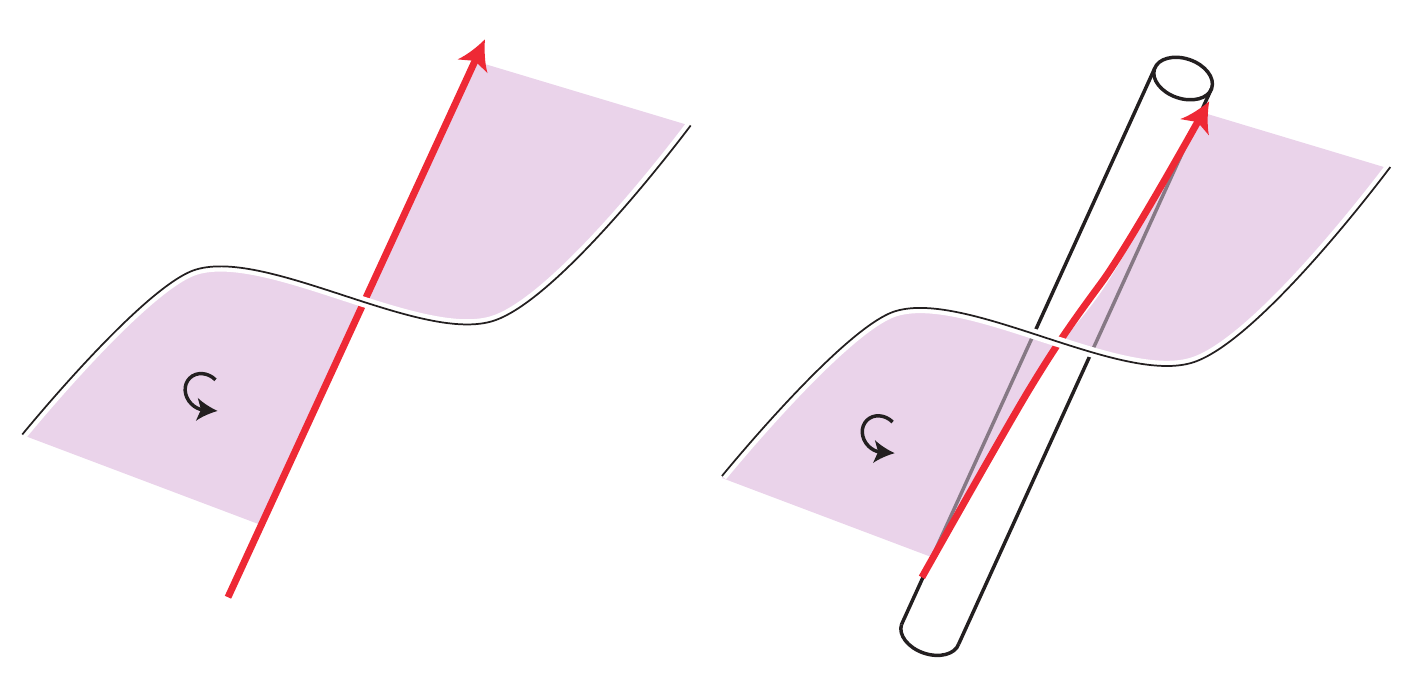}
	\caption{On the left, a link~$L$ (red) in a 3-manifold~$M$ and a surface $S$ (purple) representing 
	a class~$\sigma$ with $\bord^\bullet \sigma = L$. 
	On the right, the corresponding manifold~$M_L$ with boundary~$L\times\Sph^1$ 
	and the corresponding surface~$S$ whose boundary~$\bord^\circ S$ sits in~$L\times\Sph^1$. 
	The additional information given by the meridian coordinate of~$\bord^\circ S$ tells 
	how many times~$S$ wraps around~$L$.}
	\label{F:Boundary}
\end{figure}

Let $S$ be a surface representing the class $\sigma$ above. In order to understand the image of~$\bord^\circ$, we have to understand the framing induced by~$S$ along every boundary component (that is, the slope of~$S\cap(K_i\times\Sph^1)$ for every component $K_i$ of $L$). 

\begin{lemma}\label{L:Framing}
	If $M$ is a rational homology sphere and $S$ is a surface with~$\bord^\bullet S=\sum_i n_i K_i$, 
	then the coordinates of $\bord^\circ S$ along~$K_i$ in the (meridian, 0-longitude)-basis 
	are~$\displaystyle{\bigg(-\sum_{j\neq i} n_j \,\Lk(K_i, K_j),n_i\bigg)}$. 
\end{lemma}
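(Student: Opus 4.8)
The plan is to compute the two coordinates of $\bord^\circ S$ along $K_i$ separately. Denote by $T_i$ the component of $\bord M_L$ lying over $K_i$, by $\mu_i$ and $\lambda_i$ the meridian and the (rational) $0$-longitude on $T_i$, and set $c_i := \bord^\circ S\cap T_i$, so that $[c_i]=a_i[\mu_i]+b_i[\lambda_i]$ in $\H_1(T_i;\R)$ for reals $a_i,b_i$. The longitudinal coordinate is immediate: the map $\bord^\bullet$ factors through $\bord^\circ$ followed by the collapse $T_j\to K_j$, which sends $[\mu_j]\mapsto 0$ and $[\lambda_j]\mapsto[K_j]$; comparing with $\bord^\bullet S=\sum_j n_jK_j$ forces $b_i=n_i$. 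The content of the lemma is therefore the identity $a_i=-\sum_{j\neq i}n_j\,\Lk(K_i,K_j)$.

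To obtain $a_i$ I would compute the linking number $\Lk(c_i,K_i)$ in two ways. First, by bilinearity together with $\Lk(\mu_i,K_i)=1$ (a meridian disc meets $K_i$ once) and $\Lk(\lambda_i,K_i)=0$ (the defining property of the $0$-longitude, see Definition~\ref{D:Slk}), one reads off $\Lk(c_i,K_i)=a_i$. Second, I would compute $\Lk(c_i,K_i)$ geometrically from $S$. The key step is a truncation: removing the part of $S$ contained in the open neighbourhood $\nu(K_i)$ yields a $2$-chain $\hat S\subset M\setminus K_i$ whose boundary is
\[
	\bord\hat S=c_i+\sum_{j\neq i}n_j\,K_j,
\]
since the deleted collar of $S$ interpolates between $c_i$ on $T_i$ and $n_iK_i$, while the boundary components $K_j$ with $j\neq i$ are untouched. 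Hence $c_i$ is homologous in $M\setminus K_i$ to $-\sum_{j\neq i}n_jK_j$, and because the linking number with $K_i$ depends only on this homology class, bilinearity gives $\Lk(c_i,K_i)=-\sum_{j\neq i}n_j\,\Lk(K_i,K_j)$. Comparing the two computations yields the claim.

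The main obstacle is the bookkeeping of orientations, together with the rational refinement. The sign in the final formula is governed entirely by the induced boundary orientation of the truncated surface $\hat S$ along $T_i$, so I would fix all conventions once and for all---orienting $\mu_i$ by $\Lk(\mu_i,K_i)=+1$ and orienting $c_i$ as the boundary of $S$---so that the $+c_i$ appears in $\bord\hat S$ exactly as written, producing the minus sign after moving it to the other side. When $M$ is not an integral homology sphere, $\lambda_i$ is only a rational framing and the chains above must be taken with rational coefficients; I would then check that the isomorphism $\bord^\bullet\colon\H_2(M,L;\R)\to\H_1(L;\R)$ and the vanishing $\Lk(\lambda_i,K_i)=0$ persist verbatim, so that the same two-step argument goes through over $\R$.
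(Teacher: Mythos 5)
Your proof is correct, but it takes a genuinely different route from the paper's. The paper argues geometrically: it replaces $S$ by a desingularisation of the union $\cup_i n_i S_i$ of Seifert surfaces for the individual components (invoking the fact that in a rational homology sphere all oriented surfaces with the same boundary induce the same boundary framing), and reads off the meridian coordinate by counting the meridians removed in the desingularisation each time $S_j$ meets $K_i$, which happens $\Lk(K_i,K_j)$ times. You instead compute purely homologically: writing $c_i=\bord^\circ S\cap(K_i\times\Sph^1)$, your truncation produces a rational $2$-chain in $M\setminus K_i$ with boundary $c_i+\sum_{j\neq i}n_j K_j$, whence $\Lk(c_i,K_i)=-\sum_{j\neq i}n_j\,\Lk(K_i,K_j)$ since linking with $K_i$ kills boundaries in the complement; expanding $[c_i]$ in the $(\mu_i,\lambda_i)$-basis and using $\Lk(\mu_i,K_i)=1$ and $\Lk(\lambda_i,K_i)=0$ identifies this linking number with the meridian coordinate, and your factorisation of $\bord^\bullet$ through the collapse $K_i\times\Sph^1\to K_i$ correctly pins down the longitudinal coordinate $n_i$. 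Your approach buys several things: it applies directly to the given $S$ rather than to a desingularised substitute, it renders the paper's appeal to well-definedness of the induced framing unnecessary (your linking computation is in effect a proof of that fact), and it handles the rational, non-integral case and the orientation conventions transparently through the single normalisation $\Lk(\mu_i,K_i)=+1$ together with the boundary orientation of the truncated chain. What the paper's argument buys in exchange is an explicit local picture of the surface near the link --- the resolution of the intersection arcs $S_i\cap S_j$ --- which the paper reuses later, for instance when constructing Birkhoff sections for the Hopf flow in Section~\ref{s:examples}.
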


\begin{proof}
Since $M$ is a homology sphere, all oriented surfaces with the same boundary induce the same framing on the boundary. 
An oriented surface realizing~$[S]$ is obtained by desingularising the union $\cup_i n_i(\sigma) S_i$ where $S_i$ is an oriented surface in~$M$ with boundary~$K_i$. 
Observe that the intersection of $S_i$ and $S_j$, for $i\neq j$, can be made either empty or transverse, and when non-empty it can be made of segments with one endpoint in $K_i$ and the other in $K_j$. 

If the intersection is non-empty, at each connected component of the intersection we have two choices of desingularisation, but only one that respects orientations. 
The desingularisation near the endpoints of each segment in the intersection is obtained by removing one meridian to~$K_i$ everytime $S_j$ intersects~$K_i$. 
This number is equal to $\Lk(K_i,K_j)$ and hence the total meridian contribution of all surfaces on~$K_i$ is~$-\sum_{j\neq i} n_j \Lk(K_i, K_j)$. 
On the other hand the longitudinal coordinate is unchanged in this desingularisation process, thus it is~$n_i$. The conclusion follows immediately.
\end{proof}

%%%%%%%%%%%%%%%%%%%%%%%%%%%%%%%%%%%%%%%%%%%%%%%

\subsection{The Euler class of $X_\Gamma^\perp$}\label{S:QHS}

Assume that $M$ is a rational homology sphere with empty boundary. 
We are  given a non-singular vector field~$X$ on~$M$, 
a finite collection~$\Gamma=\gamma_1\cup\dots\cup \gamma_m$ of periodic orbits of~$X$ 
and multiplicities $n_1, \dots, n_m$ which are integers. 
In this context the existence of a Birkhoff section for~$X$ bounded
by~$\ds{\cup_{i=1}^m} n_i\gamma_i$ is the same as the existence of a
cross section~$(S, \bord S)$ for the extension of $X$ to the
manifold~${M_\Gamma}$ such that the longitudinal coordinates of~$\bord^\circ S$ are~$(n_1, \dots, n_m)$.

Denote by~$X_\Gamma$ the extension of~$X$ to~$M_\Gamma$. 
In order to understand the  topology of the cross section,  
one wonders whether the class $e(X_\Gamma^\perp)$ may be easily represented. 

Since the Euler class of~$X^\perp$ vanishes, there exists non-singular vector fields on~$M$ everywhere transverse to~$X$. 
Since $M$ is a homology sphere, two such vector fields are homotopic through vector fields that are everywhere transverse to~$X$. 
Indeed the first vector field gives an origin to the normal sphere bundle, so that the second vector field yields a function on the circle. 
Since $M$ is a homology sphere, this function is homotopic to a constant function.

Denote by~$\zeta_X$  a vector field transverse to $X$. 
We use $\zeta_X$ to realise  the Euler class~$e(X_\Gamma^\perp)$. As in Fried and Thurston theorem (see Theorem~\ref{thm: ft}), the Euler class of a vector bundle is represented by the intersection with a generic section (see~\cite[Prop. 12.8]{BottTu}). 
Here one has to take into account the fact that $M_\Gamma$ has boundary. 

\begin{proof}{[Proof of Theorem~\ref{T:Chi}]}
The vector field $\zeta_X$ is everywhere transverse to~$X$ on~$M$ but not tangent to~$\bord M_\Gamma$. 
In order to make it tangent to the boundary of the manifold, we have to ``rotate''~$\zeta_X$ towards~$X$ around each component~$\gamma_i$. This can be achieved by combining, near $\gamma_i$, the two vector fields $\zeta_X$ and $X_\Gamma$ (see Figure~\ref{F:Zeta}).

We obtain a vector field $\zeta_{X, \Gamma}$ which is transverse to~$X_\Gamma$ on~$M_\Gamma$, except at the boundary components, where it is tangent to~$X_\Gamma$ along two curves that correspond to the framings given by~$\zeta_{X}$ and~$-\zeta_X$. 
In particular the set $L_{\zeta_X, \Gamma}:=\{p\in M_\Gamma\,|\,X_\Gamma(p) \parallel \zeta_{X, \Gamma}(p)\}$ is a collection of two longitudes~$\gamma_i^{\mathrm{in}}, \gamma_i^{\mathrm{out}}$ for every boundary component~$\gamma_i\times\Sph^1$ of $\bord M_\Gamma$.

\begin{figure}[h!]
	\centering
	\includegraphics{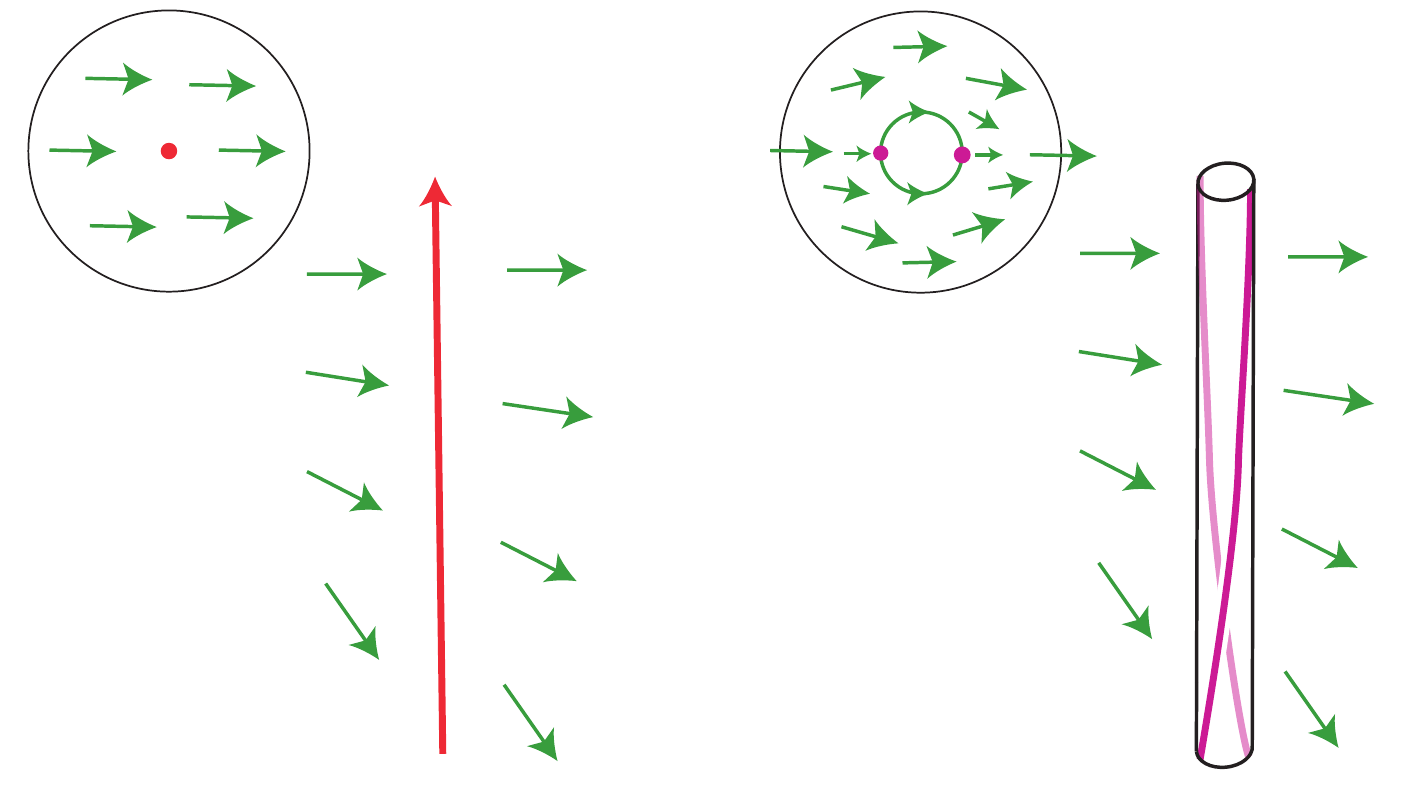}
	\caption{On the left, the vector field~$\zeta_X$ (green) on~$M$. 
	Since it is transverse to~$X$, is it transverse to the link~$\Gamma$ (red) made of periodic orbits of~$X$. 
	Seen from above, $\Gamma$ is a point and $\zeta_X$ is a non-vanishing vector field. 
	On the right, the modification of~$\zeta_X$ into~$\zeta_{X, \Gamma}$. 
	Seen from above, one has to slow down~$\zeta_X$ so that it has (transversal) speed~0 on~$\Gamma$. 
	The set~$L_{\zeta_X, \Gamma}$ (pink) then consists of two longitudes per component of~$\Gamma$.
	}
	\label{F:Zeta}
\end{figure}

Orienting $L_{\zeta_X, \Gamma}$ so that its class is dual to~$e(X_\Gamma^\perp)$ can be done as in Section~\ref{S:Genus}. 
Considering a component~$\gamma_i$ of~$\Gamma$, one has to take a small disc~$D$ in~$M$ transverse to~$\gamma_i$. 
The corresponding annulus $D_\Gamma$ in $M_\Gamma$ is automatically transverse to~$L_{\zeta_X, \Gamma}$. 
The projection of~$\zeta_{X,\Gamma}$ on~$D_\Gamma$ exhibits two singularities of index~$-\frac 12$ (see the right-hand disc in  Figure~\ref{F:Zeta}). 
Therefore, if~$\gamma_i^{\mathrm{in}}$ and $\gamma_i^{\mathrm{out}}$ are oriented in the direction opposite to~$X$, they both have multiplicty~$\frac 12$. 

Let $S$ be the surface transverse to $X$ in the statement of Theorem~\ref{T:Chi} and  $\sigma\in\H_2(M_\Gamma, \bord M_\Gamma; \Z)$ be its class, then $\chi(S)=e(X_{\Gamma}^\perp)(\sigma) = \langle L_{\zeta_X, \Gamma}, \sigma\rangle$. 
This intersection equals
\[\sum_{i=1}^m \langle \gamma_i^\mathrm{in}+\gamma_i^\mathrm{out}, (\bord^\circ \sigma)_i \rangle, \]
where $(\bord^\circ \sigma)_i$ denotes the part of $\bord^\circ \sigma$ on the component~$\gamma_i\times \Sph^1$ of $\bord M_\Gamma$.

Now the algebraic intersection of two curves on a 2-torus is the determinant of their coordinates in homology.
Since $ \gamma_i^\mathrm{in}$ and $\gamma_i^\mathrm{out}$ both have coordinates $(-\Slk^{\zeta_X}(\gamma_i),-1)$ in the (meridian, 0-longitude)-basis of~$\gamma_i\times\Sph^1$, and since every intersection point contributes to~$\frac 12$ to the intersection, thanks to Lemma~\ref{L:Framing}, we have
 \[\langle \gamma_i^\mathrm{in}+\gamma_i^\mathrm{out}, (\bord^\circ \sigma)_i \rangle
 = -n_i\Slk^{\zeta_X}(\gamma_i) - \sum_{j\neq i} n_j \,\Lk(K_i, K_j).\]

Summing over all boundary components, we get the desired formula.
\end{proof}

\begin{proof}{[Proof of Corollary~\ref{Coro}]}
From the surface $S$ bounded by~$\cup n_i\gamma_i$ described above, we only have to cap all boundary components with discs for obtaining a closed surface. 
However counting how many discs we need is a bit subtle: along each orbit~$\gamma_i$, the surface winds $n_i$ times longitudinally and, thanks to Lemma~\ref{L:Framing}, $-\sum_{j\neq i}n_j \,\Lk(\gamma_i, \gamma_j)$ meridionally. 
Therefore the number of boundary components along~$\gamma_i$ of the abstract surface~$S$ is the $\gcd$ of these numbers. 
We get
\begin{eqnarray*}
g(S)&=&1-\frac{\chi(S)+\sum_i \gcd(n_i, \sum_{j\neq i}n_j \,\Lk(\gamma_i, \gamma_j))}{2}\\
&=& 1+\frac{1}{2}\left(\sum_{1\le i<j\le m} (n_i{+}n_j)\Lk(\gamma_i, \gamma_j) + \sum_{1\le i\le m} \left(n_i\Slk^{\zeta_X}(\gamma_i)-\gcd(n_i, \sum_{j\neq i}n_j \,\Lk(\gamma_i, \gamma_j))\right)\right).
\end{eqnarray*}
\end{proof}

%%%%%%%%%%%%%%%%%%%%%%%%%%%%%%%%%%%%%%%%%%%%%%%

\section{Examples: Birkhoff sections for the Hopf vector field}\label{s:examples}

Let $\XHopf$ denote the Hopf vector field on~$\Sph^3$ and $\phiHopf$ denote the associated flow. 
Every orbit~$\gamma$ of~$\phiHopf$ is periodic and bounds a disc~$D_\gamma$ transverse to~$\XHopf$. 
Since any two orbits have linking number~$+1$, the disc~$D_\gamma$ is a Birkhoff section for~$\phiHopf$.
More generally, if $\cup n_i\gamma_i$ and $\cup n_i'\gamma_i'$ are two collections of disjoint periodic orbits with multiplicities, their linking number is given by~$(\sum n_i)(\sum n_i')$. 

On the other hand, a vector field transverse to~$\XHopf$ can be easily found by taking another Hopf fibration $\zetaHopf$ {orthogonal} to~$\XHopf$. 
One checks that for every periodic orbit~$\gamma$ of~$\phiHopf$, one has $\Slk^{\zetaHopf}(\gamma)=-1$. 

For a Birkhoff section that is a disc~$D_\gamma$ with one boundary component~$\gamma$, Theorem~\ref{T:Chi} then yields
\[\chi(D_\gamma) = -(-1)=+1,\]
as expected. 

Consider now a collection $\gamma_1, \dots, \gamma_m$ of $m$ periodic orbits. 
The link~$\Gamma:=\sum_i\gamma_i$ links positively with any positive invariant measure. 
Hence it bounds a Birkhoff section, denoted by~$S_\Gamma$. 
This section can be obtained from the union~$D_{\gamma_1} \cup\dots\cup D_{\gamma_m}$ by desingularising along the segments where two discs intersect. As discussed in the proof of Lemma~\ref{L:Framing}, for each segment there are two possible ways to resolve the intersection an obtain a (non-orientable) surface with the same boundary. 
But since here we want the resulting surface to be transverse to the vector field $\XHopf$, there is only one way to desingularise.
 
The Euler characteristic of the resulting surface can be computed by hand, but Theorem~\ref{T:Chi}  directly yields
\[\chi(S_\Gamma) = -m(m-1)-(-m)= -m(m-2).\]
Since $S_\Gamma$ has $m$ boundary components, we obtain 
\[g(S_\Gamma)=1-\frac{\chi(S_\Gamma)+m}{2}= 1+\frac{m(m-3)}{2}\]
which is the genus of a Hopf link with $m$ components. %
One can generalise a bit more by considering a collection~$\cup_in_i\gamma_i$, where the $n_i$ are integers. 
The condition for bounding a Birkhoff section then becomes $\sum_i n_i>0$,  since the linking number of $\cup_in_i\gamma_i$ with any other orbit of the flow has to be positive. 
Denoting by~$S_{\cup_in_i\gamma_i}$ such a Birkhoff section, 
Theorem~\ref{T:Chi} yields 
\[	
	\chi(S_{\cup_in_i\gamma_i})=-\sum_{1\le i<j\le m} (n_i+n_j) + \sum_{1\le i\le m} n_i=  \sum_{1\le i\le m}(1-m)n_i+ \sum_{1\le i\le m}n_i= (2-m)\sum_{1\le i\le m} n_i.
\]

%%%%%%%%%%%%%%%%%%%%%%%%%%%%%

\section{Genus and the Ruelle invariant}\label{s:asymptotic}

In this section we prove  Theorem~\ref{T:AsGenus}. 
As discussed above, when $M$ is a homology sphere and $X$ a non-singular vector field on~$M$, 
for every periodic orbit~$\gamma$ of $X$, we have presented two preferred framings, 
namely the zero-framing determined by a spanning surface, and the framing given by a vector field~$\zeta_X$ everywhere transverse to~$X$. 
By definition, the difference of these two framings along $\gamma$ is~$\pm\Slk^{\zeta_X}(\gamma)$. 

If $\gamma$ is the unique boundary component of a Birkhoff section, Corollary~\ref{Coro} says that the genus of this Birkhoff section is~$1+(\Slk^{\zeta_X}(\gamma)-1)/2$. 
One wonders whether this quantity has an asymptotic behaviour when~$\gamma$ tends to fill~$M$. 
Two related quantities are known to have one, and we present them now. 
Both rely on a third framing on~$\gamma$, given by the differential of the flow.

\subsection{Three framings on~$\gamma\times\Sph^1$}
Recall that $M_\gamma$ is the 3-manifold with boundary obtained from~$M$ by replacing every point of~$\gamma$ by its sphere normal bundle~$S(TM/\R X)$ which is topologically a circle. 
If $X$ is at least~$C^1$, we can then extend~$X$ to~$\gamma\times\Sph^1$ using the differential of the flow, and obtain a non-singular vector field~$X_\gamma$ on~$M_\gamma$. 
Now we have three framings on~$\gamma\times\Sph^1$, two integral ones  (the zero-framing and the one induced by~$\zeta_X$) and one real (induced by~$DX$). 

The restriction of~$X_\gamma$ to~$\bord M_\gamma$ is a vector field on a torus whose first coordinate (in the $X$-direction) can be made constant. 
Hence it has a well-defined translation number: 
the Ruelle invariant~$R^X(\gamma)$  is defined as the translation number of~$X_\gamma|_{\gamma\times\Sph^1}$ with respect to the framing~$\zeta_X$~\cite{Ruelle,GGRuelle}.
On the other hand the rotation number of $X_\gamma|_{\gamma\times\Sph^1}$ with respect to the zero-framing is given by~$\Slk^{DX}(\gamma)$.
Both these numbers are real (and not necessarily integers).

Since the quantities $R^X(\gamma), \Slk^{\zeta_X}(\gamma)$ and $\Slk^{DX}(\gamma)$ denote the respective difference between the three possible pairs of framings, we have
\begin{equation}
\label{Eq:Framings} \Slk^{\zeta_X}(\gamma) = \Slk^{DX}(\gamma) - R^X(\gamma),
\end{equation}
where only the term $\Slk^{\zeta_X}(\gamma)$ is always an integer.

The Ruelle invariant may be extended to any $X$-invariant measure using long arcs of orbits, but we do not need this here (see~\cite{GGRuelle}).

%%%%%%%%%%%%%%%%%%%%%%%%%%%%%

\subsection{Asymptotic genus for right-handed vector fields}\label{S:AsGenus}

Assume that $X$ is now a right-handed vector field on a rational homology sphere~$M$, meaning that all $X$-invariant positive measures have positive linking number~\cite{GhysLeftHanded}. 
In this context Ghys proved that every periodic orbit bounds a Birkhoff section. 
It is also known that such a section is genus-minimizing (this is a folklore result among 3-dimensional topologists, a possible reference is~\cite{ThurstonNorm} although the statement is older). 
Therefore the genus of a periodic orbit~$\gamma$ is given by~$1+(\Slk^{\zeta_X}(\gamma)-1)/2$. 

\begin{proof}{[Proof of Theorem~\ref{T:AsGenus}]}
Arnold and Vogel proved that if $(\gamma_n)$ is a sequence a periodic orbits that tend to an invariant volume~$\mu$ in the weak-$*$ sense, then writing~$t_n$ for the period of~$\gamma_n$, the sequence~$\frac 1{t_n^2} \Slk^{DX}(\gamma_n)$ tends to the helicity~$\Hel(X, \mu)$~\cite{Arnold, Vogel}. 
Similarly, Gambaudo and Ghys proved that the sequence~$\frac 1{t_n} R^X(\gamma_n)$ tends to the Ruelle invariant~$R^X(\mu)$~\cite{GGRuelle}. 

Since one term grows quadratically and the other one linearly on $t_n$, in the right-hand side of Equation~\eqref{Eq:Framings}, the term $\frac 1{t_n^2}R^X(\gamma_n)$ is negligible, and the asymptotic is dictated by~$ \Slk^{DX}(\gamma_n)$. 
In particular we have
\begin{equation*}
\frac 1{t_n^2} \chi_{min}(\gamma_n) =-\frac{1}{t_n^2}\Slk^{\zeta_X}(\gamma_n)\to -\Hel(X, \mu).
\end{equation*}
Then
$$\lim_{t_n\to \infty} \frac{1}{t_n^2}g(\gamma_n)=\lim_{t_n\to \infty} \frac{1}{t_n^2}\left(1+\frac{\Slk^{\zeta_X}(\gamma_n)-1}{2}\right)=\lim_{t_n\to \infty} \frac{\Slk^{\zeta_X}(\gamma_n)}{2t_n^2}=\frac{1}{2}\Hel(X, \mu).$$
\end{proof}

In other words, the genus is an asymptotic invariant of order~$2$ for right-handed volume-preserving vector fields, and its asymptotic is half the helicity. 

Remark that Baader proved that the slice genus (for arbitrary vector fields, not only right-handed) is an asymptotic invariant of order~$2$, and that it is also equal to half the helicity~\cite{Baader}. 
So for right-handed vector fields, the long periodic orbits tend to have genus and slice genus of the same order. 

%%%%%%%%%%%%%%%%%%%%%%%%%%%%%

\bibliographystyle{siam}

\end{document}